\newcommand{\norm}[1]{\lVert#1\rVert}
\newcommand{\R}{\mathbb{R}}
\newcommand{\Z}{\mathbb{Z}}
\newcommand{\N}{\mathbb{N}}
\newcommand{\Cay}{\mathrm{Cay}}
\newcommand{\modulus}[1]{\left| #1 \right|}
\newcommand{\Supp}[1]{\mathrm{Supp}(#1)}
\newcommand{\Magnus}[1]{\left(\begin{array}{cc} \theta(#1) & \frac{\partial^\star #1}{\partial x_1}t_1 + \ldots + \frac{\partial^\star #1}{\partial x_r}t_r \\ 0 & 1 \end{array}\right)}
\newcommand{\SolMX}[2]{{\left(\begin{array}{cc} #1 & #2 \\ 0 & 1 \end{array}\right)}}
\newcommand{\ddxi}[1]{\frac{\partial #1}{\partial x_i}}
\newcommand{\dstardxi}[1]{\frac{\partial^\star #1}{\partial x_i}}
\newcommand{\maggeo}{\varphi_{\mathrm{geo}}}
\newcommand{\Id}{\mathrm{Id}}
\newtheorem{thm}{Theorem}[section]{\bf}{\it}
\newtheorem{cor}[thm]{Corollary}{\bf}{\it}
\newtheorem{lem}[thm]{Lemma}{\bf}{\it}
\newtheorem{thmspecial}{Theorem}{\bf}{\it}
\title{Metric behaviour of the Magnus embedding}
\author{Andrew W. Sale}
\thanks{The author was supported by the EPSRC. The final publication is available at Springer via \texttt{http://dx.doi.org/10.1007/s10711-014-9969-z}}
\begin{document}

\begin{abstract}
The classic Magnus embedding is a very effective tool in the study of abelian extensions of a finitely generated group $G$, allowing us to see the extension as a subgroup of a wreath product of a free abelian group with $G$. In particular, the embedding has proved to be useful when studying free solvable groups. An equivalent geometric definition of the Magnus embedding is constructed and it is used to show that it is $2$-bi-Lipschitz, with respect to an obvious choice of generating sets. This is then applied to obtain a non-zero lower bound on $L_p$ compression exponents in free solvable groups.

 \medskip

\footnotesize{\noindent \textbf{2010 Mathematics Subject
Classification: 20F16, 20F65}  \\ \noindent \emph{Key words and phrases:} Geometric group theory \and Magnus embedding \and free solvable groups \and compression exponents}
\end{abstract}

\maketitle

\section{Introduction}

The Magnus embedding is a valuable tool in the study of free solvable groups. If $N$ is a normal subgroup of a (non-abelian) free group $F$ of rank $r$, whose derived subgroup is denoted $N'$, then the Magnus embedding expresses $F/N'$ as a subgroup of the wreath product $\Z^r \wr F/N$. The embedding was introduced in 1939 by Wilhelm Magnus \cite{Magn39}, and in the 1950's Fox, with a series of papers \cite{Fox53}, \cite{Fox54}, \cite{Fox56}, \cite{CFL58}, \cite{Fox60}, developed a notion of calculus on free groups which enabled the Magnus embedding to be further exploited. We will give a geometric definition for the Magnus embedding, with which we show the following:

\begin{thmspecial}\label{thmspecial:magnus q.i.}
The Magnus embedding $\varphi:F/N' \hookrightarrow \Z^r \wr F/N$ is $2$--bi-Lipschitz for an appropriate choice of word metrics.
\end{thmspecial}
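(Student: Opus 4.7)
The plan is to equip $F/N'$ with the word metric from $\bar X = \{\bar x_1, \ldots, \bar x_r\}$ (images of a free basis of $F$) and $\Z^r \wr F/N$ with the word metric from $\bar X \cup T$, where $T = \{t_1, \ldots, t_r\}$ are the canonical basis generators of $\Z^r$ placed at the identity coset. Writing $|\cdot|$ and $|\cdot|_W$ for the two word lengths, the goal is
\[
|g| \le |\varphi(g)|_W \le 2\,|g|, \qquad g \in F/N',
\]
which gives the $2$-bi-Lipschitz conclusion. The upper bound is immediate: direct matrix computation gives $\varphi(\bar x_i) = \SolMX{\bar x_i}{t_i}$, which factors as the length-$2$ wreath word $t_i\,\bar x_i$ (and similarly $\varphi(\bar x_i^{-1}) = \bar x_i^{-1} t_i^{-1}$), so substituting into a geodesic $\bar x_{i_1}^{\epsilon_1}\cdots \bar x_{i_n}^{\epsilon_n}$ of $g$ yields a wreath word of length $2n$.

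For the lower bound I use the standard lamplighter description
\[
|(f,h)|_W \;=\; \sum_{y \in F/N} \|f(y)\|_1 \;+\; \tau(f,h),
\]
where $\tau(f,h)$ is the shortest walk in the Cayley graph of $F/N$ starting at $e$, visiting $\mathrm{Supp}(f)$, and ending at $h$. Writing $\varphi(g) = (f_g, \alpha(g))$, set $L = \sum_y \|f_g(y)\|_1$ and $\tau = \tau(f_g, \alpha(g))$, so that $|\varphi(g)|_W = L + \tau$. Unpacking the Magnus image of a word in $\bar X$ letter by letter shows that the lamp function $f_g$ records exactly the signed edge-traversal counts of the walk traced by the word; conversely, by injectivity of $\varphi$, any walk from $e$ to $\alpha(g)$ whose signed traversals equal $f_g$ lifts to a word for $g$. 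Hence $|g|$ equals the minimum length of a walk in $F/N$ from $e$ to $\alpha(g)$ realizing $f_g$ as a signed flow.

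To construct such a walk of length at most $L + \tau$, begin with a TSP tour of length $\tau$ visiting $\mathrm{Supp}(f_g)$ and ending at $\alpha(g)$. The Fox identity $\sum_i (\bar x_i - 1)\,\partial^\star g/\partial x_i = \alpha(g) - 1$ expresses that $f_g$ is a conservative flow from $e$ to $\alpha(g)$, so its restriction to each connected component of the required edge set is Eulerian modulo endpoints. At each first visit of the tour to a vertex of $\mathrm{Supp}(f_g)$, insert a local Eulerian realization of the flow within the component containing that vertex; the total length of these insertions is $L$. The tour itself bridges components at cost at most $\tau$, so the resulting walk realizes $f_g$ in length at most $L + \tau = |\varphi(g)|_W$, establishing $|g| \le |\varphi(g)|_W$. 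The main technical hurdle is precisely the flow characterization of $|g|$: its forward direction follows from the product rule for the Fox derivatives $\partial^\star/\partial x_i$, while the converse rests on the injectivity of the Magnus embedding; a secondary point is verifying that the Eulerian insertions patch coherently with the TSP tour, which uses the fact that every edge on which $f_g$ is nonzero has its tail in $\mathrm{Supp}(f_g)$ and hence on the tour.
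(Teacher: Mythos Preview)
Your high-level route is the same as the paper's: identify $\varphi(g)=(f_g,\alpha(g))$, use the lamplighter length formula $|\varphi(g)|_W=L+\tau$ (with $L=\sum_y\|f_g(y)\|_1$ and $\tau$ the travelling-salesman length through $\Supp{f_g}$), and recognize $|g|$ as the minimum length of a walk in $\Cay(F/N,X)$ from $e$ to $\alpha(g)$ whose signed edge-count equals $f_g$. The paper packages this last point as the Droms--Lewin--Servatius formula $|g|=L+2W(\pi_w)$; your derivation of the flow characterization from the Fox product rule and the injectivity of $\varphi$ is correct and amounts to the same thing. The upper bound $|\varphi(g)|_W\le 2|g|$ is identical to the paper's.

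There is, however, a genuine gap in your lower-bound construction. Your walk ``TSP tour with Eulerian insertions'' does \emph{not} realize $f_g$: the tour contributes its own signed flow $\pi_\tau$ (a unit flow from $e$ to $\alpha(g)$, in general nonzero on many edges), while the Eulerian insertions contribute $f_g$, so the combined walk has net signed flow $\pi_\tau+f_g$, not $f_g$. There is also a secondary problem in the component of the edge-support containing $e$ and $\alpha(g)$: there the Eulerian realization is a path from $e$ to $\alpha(g)$, not a circuit, so it cannot be inserted as a detour returning to the insertion vertex. Hence your claimed inequality $|g|\le L+\tau$ is not established; in Droms--Lewin--Servatius language you are asserting $2W(\pi_w)\le\tau$, whereas the paper proves (and needs) only $W(\pi_w)\le\tau$.

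For the $2$-bi-Lipschitz statement you need only $|g|\le 2(L+\tau)$, and this is recoverable. The paper's argument is clean: take the optimal tour $q$ of length $\tau$ and extend it to a path $q'$ visiting every vertex of the edge-support by appending each missing support edge together with its backtrack; all added edges are on-support, so the off-support edges of $q'$ are among the edges of $q$, giving $W(\pi_w)\le\tau$ and hence $|g|=L+2W\le L+2\tau\le 2(L+\tau)$. Alternatively, in your framework, replace the tour by one with zero net flow (e.g.\ a doubled spanning tree of $\Supp{f_g}\cup\{e,\alpha(g)\}$), of length at most $2\tau$; then the Eulerian insertions alone carry the flow $f_g$ and you obtain a realizing walk of length at most $L+2\tau$.
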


A consequence of this is that the Magnus embedding will be a quasi-isometric embedding for word metrics coming from any choice of finite generating sets.

In the realm of solvable groups, the Magnus embedding proves to be a very useful tool. In particular it provides a means to study \emph{free solvable groups}. Given a free group $F$ of rank $r$, denote by $F^{(d)}$ its $d$--th derived subgroup. The free solvable group of rank $r$ and derived length $d$ is the quotient
		$$S_{r,d} = F/F^{(d)}.$$
As its name suggests, the free solvable group $S_{r,d}$ is the free group in the variety of $r$--generated solvable groups of derived length $d$. Hence, any finitely generated solvable group is a quotient of a free solvable group. As is well known, the Magnus embedding enables us to see $S_{r,d+1}$ as a subgroup of $\Z^r \wr S_{r,d}$.

We apply Theorem \ref{thmspecial:magnus q.i.} to study the $L^p$ \emph{compression exponents} for free solvable groups. Compression exponents were first introduced by Guentner and Kaminker \cite{GK04}, building on the idea of uniform embeddings introduced by Gromov \cite{Grom93} and G. Yu \cite{Yu00}. In particular we use results of Naor and Peres \cite{NP11} and Li \cite{Li10} to show that free solvable groups have non-zero Hilbert compression exponent.

\begin{thmspecial}
For $r,d \in \N$, $r,d\neq 1$, the $L^p$ compression exponent for $S_{r,d}$ satisfies
				$$\frac{1}{d-1}\max\left\{\frac{1}{p},\frac{1}{2}\right\} \leq \alpha^\star_p(S_{r,d})\textrm{.}$$
\end{thmspecial}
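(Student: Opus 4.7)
The plan is to combine Theorem~\ref{thmspecial:magnus q.i.} with known lower bounds on $L^p$ compression of wreath products, iterating down the derived series. Taking $N = F^{(d-1)}$ so that $N' = F^{(d)}$, Theorem~\ref{thmspecial:magnus q.i.} yields a $2$-bi-Lipschitz embedding $S_{r,d} \hookrightarrow \mathbb{Z}^r \wr S_{r,d-1}$. Since each $S_{r,k}$ in turn embeds bi-Lipschitz into $\mathbb{Z}^r \wr S_{r,k-1}$, and any injective homomorphism $B \hookrightarrow B'$ of bases lifts to an injective homomorphism $\mathbb{Z}^r \wr B \hookrightarrow \mathbb{Z}^r \wr B'$ (bi-Lipschitz with respect to the standard word metrics whenever $B \hookrightarrow B'$ is), iterating composes to a bi-Lipschitz embedding
\[
S_{r,d} \hookrightarrow W_{d} := \mathbb{Z}^r \wr \bigl(\mathbb{Z}^r \wr (\cdots \wr \mathbb{Z}^r)\bigr) \qquad (d \text{ factors}).
\]
The composite bi-Lipschitz constant grows like $2^{d-1}$, which is irrelevant for compression exponents.

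Next I would use monotonicity of $\alpha^\star_p$ under bi-Lipschitz embedding---any uniform embedding $H \to L^p$ of exponent $\alpha$ restricts to one of a bi-Lipschitz subgroup with the same exponent---to obtain $\alpha^\star_p(S_{r,d}) \geq \alpha^\star_p(W_d)$. To bound the right-hand side from below I would appeal to the wreath-product compression estimates of Naor--Peres (for the Hilbert case) and their $L^p$ extensions, which produce inequalities of the form $\alpha^\star_p(\mathbb{Z}^r \wr B) \geq F(\alpha^\star_p(B), p)$, with the $\max\{1/p,1/2\}$ factor emerging as the displacement exponent of $L^p$-valued stable random walks on the base. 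Starting from $\alpha^\star_p(\mathbb{Z}^r) = 1$ (since $\mathbb{Z}^r$ embeds bi-Lipschitz into $L^p$) and iterating $F$ along $W_2, W_3, \ldots, W_d$, the resulting recurrence solves to give the stated lower bound $\alpha^\star_p(S_{r,d}) \geq \frac{1}{d-1}\max\{1/p,1/2\}$.

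The main technical obstacle is selecting a wreath-product compression inequality precise enough to produce the correct dependence on $d$: the recurrence must yield the slow $s/(d-1)$ decay (writing $s := \max\{1/p,1/2\}$) rather than the exponential $s^{d-1}$ one would get from a naive multiplicative estimate. The cleanest route, as outlined above, is to iterate Magnus all the way down to abelian bases, so that only stable random walk estimates over $\mathbb{Z}^r$ are invoked, and then to check that the threshold behaviour in the $L^p$ factor $\max\{1/p,1/2\}$ is preserved through each level of the iteration.
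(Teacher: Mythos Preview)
Your overall strategy---Magnus embedding plus an inductive wreath-product compression bound---is exactly what the paper does, but your plan has a genuine gap in the inductive step and takes an unnecessary detour.

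The detour first: there is no need to push all the way down to the iterated wreath product $W_d$. The paper simply applies the Magnus embedding once at each stage, bounding $\alpha^\star_p(S_{r,d})$ below by $\alpha^\star_p(\Z^r \wr S_{r,d-1})$ and then invoking a compression inequality for $\Z^r \wr S_{r,d-1}$ directly. Your route through $W_d$ works too, but it forces you to check that a bi-Lipschitz embedding $B\hookrightarrow B'$ lifts to a bi-Lipschitz embedding $\Z^r\wr B\hookrightarrow \Z^r\wr B'$; this is true (compare the travelling-salesman expressions from Lemma~\ref{lem:wreath metric}), but it is extra work you do not justify.

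The real gap is your choice of recurrence. You propose to iterate the Naor--Peres bound, remarking that with abelian bases ``only stable random walk estimates over $\Z^r$ are invoked.'' But the Naor--Peres inequality requires the \emph{base} group to have polynomial growth, and once $d\geq 3$ the base $W_{d-1}$ (equivalently $S_{r,d-1}$) has exponential growth, so that theorem no longer applies. The paper instead uses Li's inequality
\[
\alpha^\star_p(A\wr B)\ \geq\ \max\left\{\tfrac{1}{p},\tfrac{1}{2}\right\}\min\left\{\alpha^\star_1(A),\ \frac{\alpha^\star_1(B)}{1+\alpha^\star_1(B)}\right\},
\]
which has no growth hypothesis on $B$. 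The induction is then run entirely in $L^1$: Naor--Peres gives $\alpha^\star_1(\Z^r\wr\Z^r)\geq 1$, hence $\alpha^\star_1(S_{r,2})\geq 1$; feeding this into Li's recurrence $x\mapsto x/(1+x)$ yields $\alpha^\star_1(S_{r,d})\geq \tfrac{1}{d-1}$; one final application of Li converts this to the stated $L^p$ bound. Note that using Li alone from the start would give only $1/d$---the single Naor--Peres step at $d=2$ is what buys you $1/(d-1)$. Your remark that one must avoid the ``exponential $s^{d-1}$'' decay is well taken, but the missing ingredient is precisely Li's theorem, not an $L^p$ extension of Naor--Peres.
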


Other applications of the Magnus embedding and Theorem \ref{thmspecial:magnus q.i.} are developed in \cite{Sale12freesolvableconjugacy}, where a quantitative version of Max Dehn's conjugacy problem is investigated in the setting of wreath products and free solvable groups.

\section{Preliminaries}\label{sec:preliminaries}

\subsection{Restricted Wreath Product}

Let $A,B$ be finitely generated groups. Denote by $A^{(B)}$ the set of all functions from $B$ to $A$ with finite support, and equip it with pointwise multiplication to make it a group. The (restricted) wreath product $A \wr B$ is the semidirect product $A^{(B)} \rtimes B$. To be more precise, the elements of $A\wr B$ are pairs $(f,b)$ where $f \in A^{(B)}$ and $b \in B$. Multiplication in $A\wr B$ is given by
				$$(f,b)(g,c)=(fg^b,bc), \ \ \ f,g \in A^{(B)} , \  b,c, \in B$$
where $g^b(x)=g(b^{-1}x)$ for each $x \in B$. The identity element in $B$ will be denoted by $\Id_B$, while we use $1$ to denote the trivial function from $B$ to $A$.

We can paint a picture of $A \wr B$ in a similar vein to the well-known picture for lamplighter groups $\Z_q \wr \Z$. In the more general context where we consider $A \wr B$, the problem of determining the length of an element requires a solution to the travelling salesman problem on a Cayley graph $\Cay(B,X)$ of $B$, with respect to some finite generating set $X$.  Suppose we take an element $(f,b) \in A \wr B$. We can think of this as a set of instructions given to a salesman, who starts the day at the vertex in $\Cay(B,X)$ labelled by the identity. The instructions comprise
\begin{itemize}
\item a list of vertices to visit (the support $\Supp{f}$);
\item a particular element of $A$ to ``sell'' at each of these vertices (determined by the image of $f$ at each vertex); and
\item a final vertex $b$, where the salesman should end the day.
\end{itemize}
Intuitively, therefore, we would expect the word length of $(f,b)$ to be the ``quickest'' way to do this. In particular, the salesman needs to find the shortest route from the identity vertex to $b$ in which every vertex of $\Supp{f}$ is visited at least once. We will denote the length of such a path by $K(\Supp{f},b)$, following the notation of \cite{deCo06}.
 
The following Lemma formalises this idea. A proof of the Lemma, for a slightly more general context, can be found in the Appendix of \cite[Lemma A.1]{deCo06} and also in \cite[Theorem 3.4]{DO11}. We fix a finite generating set $X$ for $B$ and for each $b \in B$ denote the corresponding word-length as $\modulus{b}$. We consider the left-invariant word metric on $B$, given by $d_B(x,y):=\modulus{x^{-1}y}$. Similarly, fix a finite generating set $T$ for $A$ and let $\modulus{\cdot}$ denote the word-length. For $f \in A^{(B)}$, let
				$$\modulus{f} = \sum_{x \in B} \modulus{f(x)}\textrm{.}$$
Let $A_{\Id_B}$ be the subgroup of $A^{(B)}$ consisting of those elements whose support is contained in $\{ \Id_B \}$. Then $A_{\Id_B}$ is generated by $\{f_t \mid t \in T \}$ where $f_t(\Id_B)=t$ for each $t \in T$ and $A \wr B$ is generated by $\{ (1,x) \mid x \in X \} \cup \{ (f_t,\Id_B) \mid t\in T \} $. With respect to this generating set, we will let $\modulus{(f,b)}$ denote the corresponding word-length for $(f,b) \in A \wr B$.

\begin{lem}[{\cite[Lemma A.1]{deCo06}}]\label{lem:wreath metric}
Let $(f,b) \in  A \wr B$, where $A,B$ are finitely generated groups. Then
$$
\modulus{(f,b)} = K(\Supp{f},b) + \modulus{f}$$
where $K(\Supp{f},b)$ is the length of the shortest path in the Cayley graph $\Cay(B,X)$ of $B$ from $\Id_B$ to $b$, travelling through every point in $\Supp{f}$.
\end{lem}

\subsection{Fox Calculus}
In order to define and make effective use of the Magnus embedding we need to understand Fox derivatives. These were introduced by Fox in the 1950's in a series of papers \cite{Fox53}, \cite{Fox54}, \cite{Fox56}, \cite{CFL58}, \cite{Fox60}.

Recall that a derivation on a group ring $\Z(G)$ is a mapping $\mathcal{D}:\Z(G)\rightarrow \Z(G)$ which satisfies the following two conditions for every $a,b \in \Z(G)$:
\begin{eqnarray*}
 \mathcal{D}(a+b)&=&\mathcal{D}(a)+\mathcal{D}(b) \\
 \mathcal{D}(ab)&=&\mathcal{D}(a)\varepsilon(b)+a\mathcal{D}(b)
\end{eqnarray*}
where $\varepsilon : \Z(G) \rightarrow \Z$ is the additive homomorphism that sends each element of $G$ to $1$.

Given a group homomorphism $\theta : G \to H$, we can naturally extend it to a ring homomorphism $\theta^\star : \Z(G) \to \Z(H)$. By composing a derivation $\mathcal{D} : \Z(G) \to \Z(G)$ with $\theta^\star$ we obtain a derivation $\mathcal{D}^\star : \Z(G) \to \Z(H)$, satisfying:
\begin{eqnarray*}
 \mathcal{D}^\star(a+b)&=&\mathcal{D}^\star(a)+\mathcal{D}^\star(b) \\
 \mathcal{D}^\star(ab)&=&\mathcal{D}^\star(a)\varepsilon(b)+\theta^\star(a)\mathcal{D}^\star(b).
\end{eqnarray*}

Suppose $G=F$, the free group on generators $X=\{x_1,\ldots , x_r\}$. For each generator we can define a unique derivation $\frac{\partial}{\partial x_i}$ which satisfies
				$$\frac{\partial x_j}{\partial x_i} = \delta_{ij} \Id_F$$
where $\delta_{ij}$ is the Kronecker delta and $\Id_F$ is the identity element of $F$. Any derivation $\mathcal{D}$ can be expressed as a unique $\Z(F)$--linear combination of these: for each $\mathcal{D}$ there exist unique elements $k_i \in \Z(F)$ such that
				$$\mathcal{D}(a)=\sum_{i=1}^{n} k_i \frac{\partial a}{\partial x_i}$$
for each $a \in \Z(F)$. 

Fox describes the following Lemma as the ``fundamental formula'' and it can be found in \cite[(2.3)]{Fox53}. The proof is straight-forward computation.

\begin{lem}[Fundamental formula of Fox calculus]\label{lem:fundamental theorem of fox calculus}
Let $a \in \Z(F)$. Then
				$$a-\varepsilon(a)\Id_F=\sum_{i=1}^{r} \ddxi{a} (x_i -1)\mathrm{.}$$
\end{lem}

Fox derivatives also accept a form of integration, see \cite[Ch.VII (2.10)]{CF63}. In particular, given $\beta_1, \ldots , \beta_r \in \Z(F)$ one can find $c \in \Z(F)$ such that $\frac{\partial c}{\partial x_i} = \beta_i$ for each $i$. The element $c$ is unique up to addition of scalar multiples of the identity.

Given a normal subgroup $N$ in $F$ and a derivation $\mathcal{D}$ of $\Z(F)$ we can consider the derivation $\mathcal{D}^\star:\Z(F) \rightarrow \Z(F/N)$, defined by the composition of the map $\mathcal{D}$ with $\theta^\star : \Z(F) \to \Z(F/N)$, the extension of the quotient homomorphism $\theta:F \rightarrow F/N$.

The following Lemma can be deduced from the Magnus embedding, but it also follows from \cite[(4.9)]{Fox53}.

\begin{lem}\label{lem:Derivation kernel in F}
Let $g \in F$. Then $\mathcal{D}^\star(g)=0$ for every derivation $\mathcal{D}$ if and only if $g \in N'=[N,N]$.
\end{lem}

\section{The Magnus embedding}\label{subsec:The Magnus Embedding}

\subsection{Definition via Fox calculus}
The Magnus embedding was first defined in \cite{Magn39}. The first definition we give here is the same as that given by Magnus, though we use the language of Fox derivatives.

Let $F$ be the free group of rank $r$ on the generators $X = \{ x_1 , \ldots , x_r \}$ and let $N$ be a normal subgroup of $F$. The Magnus embedding gives a way of recognising $F/N'$, where $N'$ is the derived subgroup of $N$, as a subgroup of the wreath product $M(F/N)=\Z^r \wr F/N$.

Consider the group ring $\Z(F/N)$ and let $\mathcal{R}$ be the free $\Z(F/N)$--module with generators $t_1, \ldots , t_r$. We define a homomorphism
				$$\varphi : F \longrightarrow M(F/N)=\left( \begin{array}{cc}F/N & \mathcal{R} \\ 0 & 1 \end{array}\right)=\left\{ \left(\begin{array}{cc} g & a \\ 0 & 1 \end{array}\right) \mid g \in F/N, a \in \mathcal{R} \right\}$$
by
				$$\varphi(w) = \Magnus{w}$$
where $\theta$ is the quotient homomorphism $\theta:F \rightarrow F/N$. Magnus \cite{Magn39} recognised that the kernel of $\varphi$ is equal to $N'$ and hence $\varphi$ induces an injective homomorphism from $F/N'$ to $M(F/N)$ which is known as the \emph{Magnus embedding}. In the rest of this paper we will use $\varphi$ to denote both the homomorphism defined above and the Magnus embedding it induces.

Given $w \in F$, its image under the Magnus embedding can be identified with $(f,b) \in \Z^r \wr F/N$ in the following way: we take $b=\theta(w)$ and $f$ will be the function $f^{(w)}=(f_1^{(w)}, \ldots, f_r^{(w)})$, where for each $i$ the function $f_i^{(w)} : F/N \rightarrow \Z$ satisfies the equation
				$$\sum_{g \in F/N} f_i^{(w)}(g)g = \dstardxi{w} \in \Z(F/N)\textrm{.}$$

Let $d_{F/N'}$ denote the word metric in $F/N'$ with respect to the generators determined by the image of elements of $X$ under the quotient map and let $d_M$ denote the word metric on $M(F/N)$ with respect to the generating set  
				$$\left\{ \SolMX{\theta(x_1)}{0} , \ldots , \SolMX{\theta(x_r)}{0} , \SolMX{1}{t_1} , \ldots , \SolMX{1}{t_r} \right\}\textrm{.}$$
Note that this generating set is the same as that used for Lemma \ref{lem:wreath metric}.
The aim is to compare the metrics $d_{F/N'}$ and $d_M$. We will do this by giving an equivalent definition of the Magnus embedding which is described by its geometric properties.

\subsection{Geometric definition} Define a multigraph $\Gamma$ as follows. Let the vertex set of $\Gamma$ be $F/N$. For each $g \in F/N$ and $x \in X$ connect $g$ to $gx$ by an edge labelled by $x$. Denote this edge by $(g,x)$. In many cases $\Gamma$ will be the Cayley graph $\Cay(F/N, \overline{X})$, where $\overline{X}$ is the image of $X$ in the quotient map. However, if, for example, there exist distinct $x,y \in X$ such that $xN=yN$ then $\Gamma$ will have two distinct edges from $g$ to $gx=gy$, for each $g \in F/N$, while $\Cay(F/N,\overline{X})$ will have just one.

Take a word $w$ in $F$ and construct the path $\rho_w$ read out by this word in the multigraph $\Gamma$. Let $E$ be the edge set of $\Gamma$. Define a function $\pi_w : E \rightarrow \Z$ such that for each edge $(g,x) \in E$ the value of $\pi_w(g,x)$ is equal to the net number of times the path $\rho_w$ traverses this edge --- for each time the path travels from $g$ to $gx$ along $(g,x)$ count $+1$; for each time the path goes backwards, from $gx$ to $g$, along $(g,x)$ count $-1$. Note that, since $F$ is free, $\pi_w=\pi_u$ whenever the words $w$ and $u$ represent the same element in $F$.

Given $w \in F$ we will use $\pi_w$ to define a function $P_w:F/N\to \Z^r$ in the natural way:
		$$P_w(g)=\big(\pi_w(g,x_1),\ldots , \pi_w(g,x_r)\big).$$
Define the \emph{geometric Magnus embedding} to be the function $\maggeo:F \to \Z^r \wr F/N$  such that $\maggeo(w)=(P_w,\theta(w))$ for $w \in F$.

\begin{thm}\label{thm:geometric magnus}
The two definitions of the Magnus embedding, $\varphi$ and $\maggeo$, are equivalent.
\end{thm}

\begin{proof}
To do this we need to show that for each $w \in F$ the maps $f^{(w)}$ and $P_w$ are in fact the same. In particular we need that $f_i^{(w)}(g) = \pi_w(g,x_i)$ for each edge $(g,x_i)$ in $\Gamma$.

We will prove this by induction on the word-length of $w$. If $w=x_j$ then $\dstardxi{w}=\delta_{ij}\Id_{F/N}$. The path $\rho_w$ consists of just one edge: $(\Id_{F/N},x_j)$. Hence $\pi_w(g,x_i)$ is zero everywhere except when $g=\Id_{F/N}$ and $i=j$, where it takes the value $1$. Thus, in this case, $f^{(w)}=P_w$. If $w=x_j^{-1}$ then $\dstardxi{w}=-\delta_{ij}x_j^{-1}$. The path $\rho_w$ this time consists of the edge $(x_j^{-1},x_j)$ and one can check that $P_w=f^{(w)}$ holds here too.

Now suppose $w$ has length at least $2$ and that the claim holds for all words shorter than $w$. Suppose also that $w$ is of the form $w=w'x_j^\varepsilon$ where $w'$ is a non-trivial word and $\varepsilon = \pm 1$. Then
				$$\dstardxi{(w'x_j^\varepsilon)} = \dstardxi{w'} + \theta(w') \dstardxi{x_j^\varepsilon}$$
and it follows that $f_i^{(w)}(g)=f_i^{(w')}(g)$ whenever $i \neq j$. When $i = j$ we get
				$$f_i^{(w)}(g)=\left\{ \begin{array}{ll} f_i^{(w')}(g) & \textrm{if $\varepsilon = 1$ and $g \neq \theta(w')$, or $\varepsilon = -1$ and $g \neq \theta(w)$,} \\ f_i^{(w')}(g)+1 & \textrm{if $\varepsilon = 1$ and $g=\theta(w')$,}\\ f_i^{(w')}(g)-1 & \textrm{if $\varepsilon=-1$ and $g=\theta(w)$.}\end{array}\right.$$
Meanwhile, $\rho_w$ is obtained from $\rho_{w'}$ by attaching one extra edge on to its final vertex, namely the edge $(w', x_i)$ is attached if $\varepsilon=1$ or $(w,x_i)$ if $\varepsilon = -1$. Hence \mbox{$\pi_w(g,x_i)=\pi_{w'}(g,x_i)$} whenever $i \neq j$ and when $i=j$ we get
				$$\pi_w(g,x_i)=\left\{ \begin{array}{ll} \pi_{w'}(g,x_i) & \textrm{if $\varepsilon = 1$ and $g \neq \theta(w')$, or $\varepsilon = -1$ and $g \neq \theta(w)$,} \\ \pi_{w'}(g,x_i)+1 & \textrm{if $\varepsilon = 1$ and $g=\theta(w')$,}\\ \pi_{w'}(g,x_i)-1 & \textrm{if $\varepsilon=-1$ and $g=\theta(w)$.}\end{array}\right.$$
Thus, applying the inductive hypothesis gives $f_i^{(w)}(g) = \pi_w(g,x_i)$ and the equality of $P_w$ and $f^{(w)}$ therefore holds for all words $w$.
\end{proof}

\subsection{Metric behaviour of the Magnus embedding}

We will use this geometric definition of the Magnus embedding to show that its image is undistorted in $M(F/N)$.

If $w$ is a geodesic word for $g \in F/N'$ then the length of $\rho_w$ is equal to $d_{F/N'}(\Id_{F/N'},g)$. We need to compare its length with the size of $\varphi(g)$ in the wreath product. We saw above how, if $\varphi(g)=(f^{(w)},\theta(w))$, then the function $f^{(w)}$ describes the route which $\rho_w$ takes, telling us the net number of times $\rho_w$ transverses each edge. From this we deduce a relationship between the size of $g$ and the size of $\varphi(g)$.

In order to compare $d_{F/N'}$ and $d_M$ we will use an expression for word-lengths in $F/N'$ given by Droms, Lewin and Servatius \cite[Theorem 2]{DLS93}. For this we will need the following notation: Let $\Supp{\pi_w}$ denote the subgraph of $\Gamma$ containing all edges $e$ such that $\pi_w(e) \neq 0$. Consider a new path $\sigma(\pi_w)$ which is a path travelling through every point in $\Supp{\pi_w}\cup\{\Id_{F/N}\}$ so that it minimises the number of edges not contained in $\Supp{\pi_w}$. Let $W(\pi_w)$ denote this number.

\begin{lem}[Droms--Lewin--Servatius \cite{DLS93}]\label{lem:F/N' word length - Droms-Lewin-Servatius}
Let $w$ be a word on generators $X$ which determines the element $g \in F/N'$. Then
				$$d_{F/N'}(\Id_{F/N'},g)=\sum_{e \in E}\modulus{\pi_w(e)} + 2W(\pi_w)\textrm{.}$$
\end{lem}

\begin{thm}\label{thm:F/N' undistorted in M(F/N)}
The subgroup $\varphi(F/N')$ is undistorted in $M(F/N)$. To be precise, for each $g \in F/N'$
				$$\frac{1}{2}d_{F/N'}(\Id_{F/N'},g) \leq d_M(\Id_M,\varphi(g))\leq 2d_{F/N'}(\Id_{F/N'},g)\textrm{.}$$
\end{thm}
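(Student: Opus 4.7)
The plan is to compute both sides of the inequality using the length formula of Lemma \ref{lem:wreath metric} and the Droms--Lewin--Servatius formula of Lemma \ref{lem:F/N' word length - Droms-Lewin-Servatius}, linked via the identity
$$f_i^w(b) = \pi_w(b, b\alpha(x_i)) \quad \text{for all } b \in F/N \text{ and } 1 \leq i \leq r,$$
where $w$ is any word on $X$ representing $g$. This identity can be extracted by applying the Fox product rule to $w = x_{j_1}^{\epsilon_1} \cdots x_{j_n}^{\epsilon_n}$: each letter $x_{j_k}^{\epsilon_k}$ with $j_k = i$ contributes $\alpha(w_{k-1})$ to $\dstardxi{w}$ if $\epsilon_k = 1$ and $-\alpha(w_k)$ if $\epsilon_k = -1$, where $w_k$ denotes the length-$k$ prefix of $w$, and in both cases this is precisely the signed indicator that $\rho_w$ traverses the corresponding $x_i$-edge in the relevant direction. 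Summing over $k$ gives the identity, and in particular $\modulus{f^w} = \sum_e \modulus{\pi_w(e)}$.

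For the upper bound $d_M(1, \varphi(g)) \leq 2 d_{F/N'}(1, g)$, we choose $w$ to be a geodesic word for $g$ in $F/N'$. Then $\rho_w$ is a path from $1$ to $\alpha(w)$ of length $\modulus{w} = d_{F/N'}(1, g)$ that visits every vertex of $\Supp{f^w}$, since each such vertex is the starting point of an edge traversed by $\rho_w$. Hence $K(\Supp{f^w}, \alpha(w)) \leq \modulus{w}$. Combined with $\modulus{f^w} = \sum_e \modulus{\pi_w(e)} \leq \modulus{w}$ (the net edge-traversal count cannot exceed the total), Lemma \ref{lem:wreath metric} gives $d_M(1, \varphi(g)) \leq 2 \modulus{w}$.

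For the lower bound $d_{F/N'}(1, g) \leq 2 d_M(1, \varphi(g))$ the main input is the inequality $W(\pi_w) \leq K(\Supp{f^w}, \alpha(w))$. To see it, take a shortest path $\tau$ of length $K(\Supp{f^w}, \alpha(w))$ from $1$ to $\alpha(w)$ passing through $\Supp{f^w}$ and extend it to visit every vertex of $\Supp{\pi_w} \cup \{1\}$: any vertex in $\Supp{\pi_w}$ not already in $\Supp{f^w}$ is of the form $a\alpha(x_i)$ for some $a \in \Supp{f^w}$ with $(a, a\alpha(x_i)) \in \Supp{\pi_w}$, and a length-$2$ detour $a \to a\alpha(x_i) \to a$ inserted at a visit to $a$ uses only edges in $\Supp{\pi_w}$. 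The extended path witnesses $W(\pi_w) \leq K$, and then
$$d_{F/N'}(1, g) = \modulus{f^w} + 2W(\pi_w) \leq \modulus{f^w} + 2K \leq 2(\modulus{f^w} + K) = 2 d_M(1, \varphi(g)).$$

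The main obstacle is likely the identification $f_i^w(b) = \pi_w(b, b\alpha(x_i))$: getting the signs right from the Fox product rule and checking that both sides are genuinely independent of the representative $w$ for $g$ (morally because $\ker \varphi = N'$ and elements of $N'$ trace closed paths of zero net flow in $\Cay(F/N, X)$). Once this identification is available, the detour argument giving $W(\pi_w) \leq K$ is a short combinatorial observation and what remains is arithmetic.
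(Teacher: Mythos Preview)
Your proposal is correct and follows essentially the same approach as the paper: both establish the key identity $f_i^w(b)=\pi_w(b,b\alpha(x_i))$, deduce $\modulus{f^w}=\sum_e\modulus{\pi_w(e)}$, and prove $W(\pi_w)\leq K(\Supp{f^w},\alpha(w))$ via the same detour argument to obtain the lower bound. The only minor difference is in the upper bound, where the paper simply observes that each $\varphi(x_i)$ is a product of two generators of $M(F/N)$ (giving $d_M\leq 2m$ directly), whereas you route the same inequality through Lemma~\ref{lem:wreath metric} via $K(\Supp{f^w},\alpha(w))\leq\modulus{w}$ and $\modulus{f^w}\leq\modulus{w}$; both are equally valid.
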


\proof
The upper bound is immediate since each generator in $X$ is mapped under $\varphi$ to the product of two generators of $M(F/N)$.

Let $w$ be a geodesic word on $X\cup X^{-1}$ representing $g \in F/N'$. Suppose $\varphi(w)=(f^{(w)},\theta(w))$. From Lemma \ref{lem:wreath metric} the word-length in $M(F/N)$ is given by
				$$d_M(\Id_M,\varphi(w)) = K(\Supp{f^{(w)}},\theta(w)) + \sum_{y \in F/N} \norm{f^{(w)}(y)}$$
where $\norm{\cdot}$ is the $\ell_1$--norm on $\Z^r$. The expression of $f^{(w)}$ in terms of $\pi_w$ which comes from the equality of $P_w$ and $F^{(w)}$ given by in Theorem \ref{thm:geometric magnus} leads us to the equation
				\begin{equation}\label{eq:flow and function}\sum_{y \in F/N} \norm{f^{(w)}(y)} = \sum_{e \in E} \modulus{\pi_w(e)}.\end{equation}

Since an edge $e$ is in $\Supp{\pi_w}$  only if one of its ends is in $\Supp{f^{(w)}}$, we see that $\Supp{f^{(w)}}$ is contained in the subgraph $\Supp{\pi_w}$. Take a path $q$ starting at $\Id_{F/N}$ and travelling through every point in $\Supp{f^{(w)}}$, in particular we may take $q$ to be a path realising $K(\Supp{f^{(w)}},(w))$. Any edge in $\Supp{\pi_w}$ which is not in this path must have one vertex lying in the path $q$. Adding these edges to $q$ (along with the corresponding backtracking) gives a new path $q'$ passing though every point of $\Supp{\pi_w}\cup\{\Id_{F/N}\}$. Note that every edge in $q'$ that is not in $\Supp{\pi_w}$ was already in $q$. Hence the length of $q$ is bounded below by the size of $W(\pi_w)$. In particular $W(\pi_w) \leq K(\Supp{f^{(w)}},\theta(w))$ and hence, by equation \eqref{eq:flow and function} and Lemma \ref{lem:F/N' word length - Droms-Lewin-Servatius},
				$$\frac{1}{2}d_{F/N'}(\Id_{F/N'},g) \leq d_M(\Id_M,\varphi(g))$$
thus proving the result.
\qed

\section{Compression Exponents}\label{sec:compression}

We can use the fact that the Magnus embedding is a quasi-isometric embedding to obtain a lower bound for the $L^p$ compression exponent of free solvable groups. The $L^p$ compression exponent is a way of measuring how a group embeds into $L^p$.

Let $G$ be a finitely generated group with word metric denoted by $d_G$ and let $Y$ be a metric space with metric $d_Y$. A map $f : G \rightarrow Y$ is called a \emph{uniform embedding} if there are two functions $\rho_\pm : \R_{\geq 0} \rightarrow \R_{\geq 0}$ such that $\rho_-(r) \rightarrow \infty$ as $r \rightarrow \infty$ and 
				$$\rho_-(d_G(g_1,g_2))\leq d_Y(f(g_1),f(g_2)) \leq \rho_+(d_G(g_1,g_2))$$
for $g_1,g_2 \in G$.

One can define the \emph{$L^p$ compression exponent} for a finitely generated group $G$, denoted by $\alpha^\star_p(G)$, to be the supremum over all $\alpha \geq0$ such that there exists a Lipschitz map $f : G \rightarrow L^p$ satisfying
				$$Cd_G(g_1,g_2)^\alpha \leq \norm{f(g_1)-f(g_2)}$$
for any positive constant $C$. For $p=2$, the Hilbert compression exponent, which is denoted by $\alpha^\star(G)$, was introduced by Guentner and Kaminker \cite{GK04}.

Of particular interest to us is what happens to compression under taking a wreath product. The first estimate for compression exponents in wreath products was given by Arzhantseva, Guba and Sapir \cite{AGS06} where they show that the Hilbert compression exponent of $\Z \wr H$, where $H$ has super-polynomial growth, is bounded above by $1/2$. More recently Naor and Peres have given a lower bound for the compression of $A \wr B$ when $B$ is of polynomial growth \cite[Theorem 3.1]{NP11}. 

\begin{thm}[Naor--Peres \cite{NP11}]
Let $A,B$ be finitely generated groups such that $B$ has polynomial growth. Then, for $p \in [1,2]$,
				$$\alpha^\star_p(A\wr B) \geq \min \left\{ \frac{1}{p},\alpha^\star_p(A) \right\}\textrm{.}$$
\end{thm}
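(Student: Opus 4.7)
The plan is to construct, for each $\alpha < \alpha^\star_p(A)$, an equivariant Lipschitz map $\Phi \colon A \wr B \to L^p$ with $L^p$-compression at least $\min\{\alpha, 1/p\}$; letting $\alpha \nearrow \alpha^\star_p(A)$ then yields the stated bound. The construction is guided by the wreath-product length formula $\modulus{(f,b)} = K(\Supp{f}, b) + \sum_{x \in B} \modulus{f(x)}_A$ of Lemma \ref{lem:wreath metric}, and I would split $\Phi = \Phi_{\mathrm{lamp}} \oplus \Phi_{\mathrm{ptr}}$ to treat the two summands of this formula separately.

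For $\Phi_{\mathrm{lamp}}$, start from a $1$-cocycle $\psi \colon A \to L^p(\Omega_A)$ realising compression $\alpha$, and define $\Phi_{\mathrm{lamp}}(f,b) \in \ell^p(B, L^p(\Omega_A))$ by placing a suitably twisted copy of $\psi(f(x))$ at each coordinate $x \in B$. Equivariance is automatic from the shift action of $B$ on the outer $\ell^p$-sum, and computing the norm coordinate-wise yields
\[
\|\Phi_{\mathrm{lamp}}(f,b) - \Phi_{\mathrm{lamp}}(g,c)\|_p^p \;\asymp\; \sum_{x \in B} d_A(f(x),g(x))^{\alpha p}.
\]

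For $\Phi_{\mathrm{ptr}}$ I want an equivariant map whose $L^p$-norm reproduces the pointer cost $K(\Supp{f}, b)^{1/p}$. The prototype is the classical lamplighter embedding over $\Z$: encode an optimal traveling-salesman path in $\Cay(B,X)$ from $e_B$ to $b$ through $\Supp{f}$ as its signed edge-indicator in $\ell^p(E)$, so that $\ell^p$-norms count the number of traversed edges and give exactly the exponent $1/p$. Since a deterministic optimal path is not $B$-equivariant, I would replace it by an equivariant stochastic analogue, either a random geodesic or a Poisson edge-process on $\Cay(B,X)$ sampled from a $B$-invariant law, whose existence and correct normalisation both use the polynomial growth of $B$ (controlling ball sizes and variance estimates for the walk).

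Assembling the two pieces, restrict to $\alpha \leq 1/p$ (harmless when targeting $\min\{\alpha^\star_p(A), 1/p\}$) and invoke subadditivity of $t \mapsto t^{\alpha p}$ to pass from $\sum_x d_A(f(x),g(x))^{\alpha p}$ down to $(\sum_x d_A(f(x),g(x)))^{\alpha p}$; combined with the pointer bound this gives $\|\Phi(f,b)-\Phi(g,c)\|_p \gtrsim d_{A\wr B}((f,b),(g,c))^{\min\{\alpha,1/p\}}$. The main obstacle is the $\Phi_{\mathrm{ptr}}$ construction: building an equivariant family of edge-flows on $\Cay(B,X)$ whose $L^p$-norm grows like $K(\cdot)^{1/p}$ and not like $K(\cdot)$ itself requires careful averaging against an invariant path measure, and it is precisely here that the hypothesis $p \in [1,2]$ enters, via $L^p$-type estimates for the underlying Markov chain on $B$. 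Verifying this $1/p$ scaling, uniformly in the pair $(\Supp{f},b)$, is the crux of the argument; the rest reduces to routine unpacking of the two direct summands.
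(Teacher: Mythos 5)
The paper offers no proof of this statement at all---it is imported verbatim from Naor and Peres \cite{NP11}---so your sketch has to stand on its own as a proof of their theorem, and it does not: the gap sits exactly where you yourself locate the ``crux,'' namely $\Phi_{\mathrm{ptr}}$. Sending $(f,b)$ to the signed edge-indicator of a chosen optimal tour is not even a Lipschitz map into $\ell^p(E)$: multiplication by a single generator (one pointer move, one lamp toggle) can change the optimal tour globally, so images of adjacent group elements need not stay at bounded distance. Moreover the $\ell^p$-norm of a signed indicator records \emph{net} crossings, which can be far smaller than $K(\Supp{f},b)$ since backtracking cancels, and nothing in the sketch shows that the difference of the images of two elements $(f,b)$, $(g,c)$ dominates the relative travelling-salesman cost to the power $1/p$, which is what the lower bound on compression requires. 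Replacing the deterministic tour by ``a random geodesic or a Poisson edge-process sampled from a $B$-invariant law'' is not a construction: one would need the law of that random object to depend on $(f,b)$ in a Lipschitz fashion once realized inside $L^p$, while still detecting $K(\Supp{f},b)^{1/p}$, and that is precisely the content of the theorem rather than a verification one can defer. Likewise, the claim that the hypothesis $p\in[1,2]$ enters ``via $L^p$-type estimates for the underlying Markov chain on $B$'' is asserted with no supporting estimate.

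What actually makes the pointer term work in \cite{NP11} is the doubling geometry of a group of polynomial growth: the travelling-salesman cost of $\Supp{f}\cup\{e_B,b\}$ is controlled, scale by scale, by counting occupied pieces of decompositions of $\Cay(B,X)$ at dyadic scales, and the embedding is assembled from coordinates indexed by those scales and pieces so that its $p$-th power reproduces the multi-scale sum; no optimal tour is ever chosen, which is how both Lipschitzness and the $1/p$ exponent are obtained simultaneously. None of that machinery appears in your sketch, and the prototype you appeal to (the lamplighter over $\Z$) works only because tours in $\Z$ are essentially canonical, which is exactly what fails for general $B$ of polynomial growth. The lamp part of your decomposition is essentially fine, with one caveat: $\alpha^\star_p(A)$, as defined in this paper, supplies only a Lipschitz map, not a $1$-cocycle, so you should drop the cocycle/equivariance framing rather than rely on it; otherwise you are implicitly using the equivariant compression of $A$, which is in general a smaller quantity and is not what the statement assumes.
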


Li showed in particular that a positive compression exponent is preserved by taking wreath products \cite{Li10}.

\begin{thm}[Li \cite{Li10}]
Let $A,B$ be finitely generated groups. For $p \geq 1$ we have
			$$\alpha^\star_p(A \wr B) \geq \max \left\{\frac{1}{p},\frac{1}{2}\right\}\min\left\{\alpha^\star_1(A),\frac{\alpha^\star_1(B)}{1 + \alpha^\star_1(B)}\right\}\textrm{.}$$
\end{thm}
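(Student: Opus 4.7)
The plan is to use the Magnus embedding (Theorem~\ref{thm:F/N' undistorted in M(F/N)}) as a bi-Lipschitz embedding $S_{r,d}\hookrightarrow \Z^r\wr S_{r,d-1}$ to reduce the question about compression of $S_{r,d}$ to a question about compression of wreath products, and then feed the result into the theorems of Li and Naor--Peres cited just above the statement. Since compression exponents are invariant under bi-Lipschitz embeddings of finitely generated groups into overgroups, Theorem~\ref{thm:F/N' undistorted in M(F/N)} gives
$$\alpha^\star_p(S_{r,d})\geq \alpha^\star_p(\Z^r\wr S_{r,d-1}).$$
All the work then takes place on the right-hand side, with the compression exponent of $S_{r,d-1}$ treated as an inductive parameter.

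I would first prove by induction on $d\ge 2$ the single-index statement $\alpha^\star_1(S_{r,d})\geq 1/(d-1)$, since this is the quantity that feeds into Li's theorem. For the \emph{base case} $d=2$ the group $\Z^r$ is of polynomial growth, so I would apply the Naor--Peres theorem with $p=1\in[1,2]$ to the embedding $S_{r,2}\hookrightarrow \Z^r\wr\Z^r$, obtaining $\alpha^\star_1(S_{r,2})\geq \min\{1,\alpha^\star_1(\Z^r)\}=1$. For the \emph{inductive step} $d\geq 3$, Li's theorem applied to $A=\Z^r$, $B=S_{r,d-1}$ gives
$$\alpha^\star_1(\Z^r\wr S_{r,d-1})\geq \min\Bigl\{\alpha^\star_1(\Z^r),\tfrac{\alpha^\star_1(S_{r,d-1})}{1+\alpha^\star_1(S_{r,d-1})}\Bigr\},$$
and since $\alpha^\star_1(\Z^r)=1$ and $x\mapsto x/(1+x)$ is increasing, the inductive hypothesis $\alpha^\star_1(S_{r,d-1})\geq 1/(d-2)$ plugs in to yield the value $1/(d-1)$, as required.

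To conclude for general $p$, I would apply Li's theorem one more time to $\Z^r\wr S_{r,d-1}$ but now keeping track of the $\max\{1/p,1/2\}$ factor: combined with the bound $\alpha^\star_1(S_{r,d-1})/(1+\alpha^\star_1(S_{r,d-1}))\geq 1/(d-1)$ already established, this gives $\alpha^\star_p(S_{r,d})\geq \max\{1/p,1/2\}/(d-1)$ for every $d\geq 3$. The remaining case $d=2$ splits in two: for $p\in[1,2]$, Naor--Peres directly gives $\alpha^\star_p(S_{r,2})\geq \min\{1/p,1\}=1/p=\max\{1/p,1/2\}$; for $p>2$, I would invoke the standard isometric embedding $L^2\hookrightarrow L^p$ (Gaussian embedding), which yields $\alpha^\star_p\geq \alpha^\star_2$ in general, so that Naor--Peres at $p=2$ produces $\alpha^\star_p(S_{r,2})\geq \alpha^\star_2(S_{r,2})\geq 1/2=\max\{1/p,1/2\}$.

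The main obstacle I anticipate is the base case $d=2$ with $p>2$. A naive application of Li's theorem alone to $\Z^r\wr \Z^r$ loses an unwanted factor $1/2$ (since $\alpha^\star_1(\Z^r)/(1+\alpha^\star_1(\Z^r))=1/2$ now \emph{is} the binding term in the minimum), producing only $1/4$ rather than $1/2$. The clean remedy is to exploit that the Naor--Peres bound is sharper than Li's precisely when $B$ has polynomial growth, handle $p\in[1,2]$ directly with it, and then bootstrap to $p>2$ via the isometric embedding $L^2\hookrightarrow L^p$. Everything else is a short algebraic manipulation based on the fixed-point iteration $x\mapsto x/(1+x)$ with initial value $1$, which has orbit $1,\tfrac12,\tfrac13,\ldots$.
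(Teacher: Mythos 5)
There is a fundamental mismatch here: the statement you were asked to prove is Li's theorem itself, i.e.\ the general lower bound $\alpha^\star_p(A\wr B)\geq \max\{1/p,1/2\}\min\{\alpha^\star_1(A),\alpha^\star_1(B)/(1+\alpha^\star_1(B))\}$ for arbitrary finitely generated $A$ and $B$. Your proposal never engages with this. Instead you take Li's theorem (together with the Naor--Peres theorem) as a black box and use it, via the Magnus embedding and Theorem~\ref{thm:F/N' undistorted in M(F/N)}, to derive the lower bound on $\alpha^\star_p(S_{r,d})$ --- which is Corollary~\ref{cor:L^p compression exponent lower bound} of the paper, a downstream consequence, not the statement at hand. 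With respect to the stated theorem your argument is therefore circular: its key step is an appeal to the very inequality to be proved. (In the paper this theorem is an external citation to Li's article and is not proved there, so there is no internal proof to match; but the relevant point is that your attempt does not prove it either.) A genuine proof would have to work at the level of arbitrary wreath products: starting from Lipschitz maps of $A$ and $B$ into $L^1$ with prescribed compression, one must construct an embedding of $A\wr B$ into $L^p$ realizing the claimed exponent, typically by combining an embedding of the lamp configurations with an embedding of the base group and a term controlling the travelling-salesman part of the word length from Lemma~\ref{lem:wreath metric}; none of this machinery appears in your proposal.

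As a proof of Corollary~\ref{cor:L^p compression exponent lower bound}, your argument is essentially the paper's own: Naor--Peres for the metabelian base case, induction on derived length through the map $x\mapsto x/(1+x)$, and bi-Lipschitz invariance of compression under the Magnus embedding. Your treatment of the case $d=2$, $p>2$ via the isometric embedding of $L^2$ into $L^p$ is a sensible way to avoid the loss of a factor $1/2$ that a naive application of Li's bound would incur, and is if anything more explicit than the paper's one-line derivation. But that does not repair the central problem: the task was to prove Li's theorem, and no proof of it has been offered.
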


We can deduce from the result of Naor and Peres that the $L^1$ compression exponent for $\Z^r \wr \Z^r$ is equal to $1$. Hence the $L^1$ compression exponent for free metabelian groups, using Theorem \ref{thm:F/N' undistorted in M(F/N)}, is also equal to $1$. Then, with the result of Li, induction on the derived length gives us that $\alpha_1^\star(S_{r,d}) \geq \frac{1}{d-1}$. Finally, another application of Li's result gives us the following:

\begin{cor}\label{cor:L^p compression exponent lower bound}
Let $r,d \in \N$. Then
$$\alpha_1^\star(S_{r,d}) \geq \frac{1}{d-1}$$
and for $p >1$
$$\alpha_p^\star(S_{r,d}) \geq \frac{1}{d-1}\max\left\{\frac{1}{p},\frac{1}{2}\right\}\textrm{.}$$
\end{cor}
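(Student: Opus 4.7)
The plan is to combine the Magnus embedding from Theorem \ref{thm:F/N' undistorted in M(F/N)} with the two cited compression theorems of Naor--Peres and Li, and induct on the derived length $d$. The key auxiliary observation is that $L^p$ compression is monotonic under bi-Lipschitz embeddings: if $G \hookrightarrow H$ bi-Lipschitzly, then any Lipschitz map $H \to L^p$ with compression exponent $\alpha$ restricts to such a map on $G$ with the same exponent, so $\alpha_p^\star(G) \geq \alpha_p^\star(H)$. Consequently the Magnus embedding reduces bounding $\alpha_p^\star(S_{r,d})$ to bounding $\alpha_p^\star(\Z^r \wr S_{r,d-1})$.

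First I would establish the base case $d = 2$. Since $\Z^r$ has polynomial growth and $\alpha_1^\star(\Z^r) = 1$, Naor--Peres gives
$$\alpha_1^\star(\Z^r \wr \Z^r) \geq \min\{1, \alpha_1^\star(\Z^r)\} = 1.$$
Pulling back through the bi-Lipschitz Magnus embedding $S_{r,2} \hookrightarrow \Z^r \wr \Z^r$ yields $\alpha_1^\star(S_{r,2}) \geq 1 = 1/(d-1)$.

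For the inductive step at $p = 1$, assume $\alpha_1^\star(S_{r,d-1}) \geq 1/(d-2)$ for some $d \geq 3$. The Magnus embedding $S_{r,d} \hookrightarrow \Z^r \wr S_{r,d-1}$ is bi-Lipschitz, and Li's theorem gives
$$\alpha_1^\star(\Z^r \wr S_{r,d-1}) \geq \min\left\{\alpha_1^\star(\Z^r),\, \frac{\alpha_1^\star(S_{r,d-1})}{1+\alpha_1^\star(S_{r,d-1})}\right\}.$$
Because $t \mapsto t/(1+t)$ is monotone increasing, the inductive hypothesis forces the right-hand side to be at least $\tfrac{1/(d-2)}{1+1/(d-2)} = 1/(d-1)$, so $\alpha_1^\star(S_{r,d}) \geq 1/(d-1)$.

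Finally, to pass from $p=1$ to general $p > 1$, apply Li's theorem once more, noting that only $\alpha_1^\star$ appears on the right-hand side:
$$\alpha_p^\star(\Z^r \wr S_{r,d-1}) \geq \max\left\{\tfrac{1}{p},\tfrac{1}{2}\right\} \min\left\{1,\, \frac{\alpha_1^\star(S_{r,d-1})}{1+\alpha_1^\star(S_{r,d-1})}\right\} \geq \frac{1}{d-1}\max\left\{\tfrac{1}{p},\tfrac{1}{2}\right\},$$
using the bound on $\alpha_1^\star(S_{r,d-1})$ just established. A final appeal to Magnus bi-Lipschitz monotonicity transfers this to $S_{r,d}$. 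The only genuinely delicate point is bookkeeping the algebraic identity $\tfrac{1/(d-2)}{1+1/(d-2)} = 1/(d-1)$ so that the induction telescopes cleanly; there is no real obstacle beyond correctly assembling the three cited ingredients.
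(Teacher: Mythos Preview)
Your proposal is correct and follows essentially the same route as the paper: use Naor--Peres for the $d=2$ base at $p=1$, pull back through the bi-Lipschitz Magnus embedding, induct on $d$ via Li's theorem to get $\alpha_1^\star(S_{r,d})\geq 1/(d-1)$, and then apply Li once more for general $p$. You have simply written out the telescoping identity $\tfrac{1/(d-2)}{1+1/(d-2)}=1/(d-1)$ and the monotonicity-under-bi-Lipschitz-embedding observation that the paper leaves implicit.
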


It would be interesting to determine an upper bound on $\alpha_p^\star(S_{r,d})$, in particular to check if it is ever strictly less than $\frac{1}{2}$ since no example of a solvable group with non-zero Hilbert compression exponent strictly less than $\frac{1}{2}$ is known. Austin \cite{Aust11} has constructed solvable groups with $L^p$ compression exponent equal to zero. His examples are modified versions of double wreath products of abelian groups. Therefore to find such an example it seems natural to look in the class of iterated wreath products of solvable groups and special families of their subgroups, such as the free solvable groups.

\medskip

\textbf{Acknowledgements.}
The author would like to thank Cornelia Dru\c{t}u for many valuable discussions on this paper. Alexander Olshanskii's comments on a draft copy were also very helpful, as were discussions with Romain Tessera. He would also like to thank David Hume for useful discussions on $L^p$ compression exponents.

\bibliography{bibliography}{}
\bibliographystyle{spmpsci}

\small{ 
\noindent  \textsc{Andrew W.\ Sale} \rule{0mm}{6mm} \\
IRMAR,
Universit\'{e} de Rennes 1, 35042 Rennes Cedex, France \\ \texttt{andrew.sale@some.oxon.org}, \
{http://perso.univ-rennes1.fr/andrew.sale/}

}

\end{document}